\newtheorem{theorem}{Theorem}
\newtheorem{lemma}[theorem]{Lemma}
\newtheorem{conjecture}[theorem]{Conjecture}
\theoremstyle{definition}
\newtheorem{remark}[theorem]{Remark}
\newtheorem*{definition}{Definition}
\newtheorem*{notation*}{Notation}
\begin{document}
\title{On some nested floor functions and their jump discontinuities}
\author{Luca Onnis}
\date{March 2022}
\lstloadlanguages{[5.2]Mathematica}

\maketitle
\begin{abstract}
    This paper investigates some particular limits involving nested floor functions. We'll prove some cases and then we'll show a more general result. Then we'll count the discontinuity points of those functions, and we'll prove a method to find them all. Surprisingly the set of the jump discontinuities of $f_n$ is a subset of the set of the jump discontinuities of $f_{n+1}$, $\forall n\in\mathbb{Z^{+}}$ where:
    \[
    f_n(x)=\underbrace{\Biggl\lfloor x\Bigl\lfloor x \lfloor\dots\rfloor\Bigr\rfloor\Biggr\rfloor}_{\text{$n$ times}}
    \]
    Furthermore we'll give some generalizations of the result and lots of considerations; for example we'll prove that the cardinality of the set of the discontinuities of $f_n$ in a given limited interval approaches infinity as $n\to\infty$.
\end{abstract}
\tableofcontents
\section{Introduction}
\begin{definition}
In mathematics and computer science, the floor function is the function that takes as input a real number $x$, and gives as output the greatest integer less than or equal to $x$, denoted $\lfloor x \rfloor$.
\end{definition}
\subsection{First problem}
Let $k\in\mathbb{Z^{+}}\setminus\{1\}$ and $n\in\mathbb{Z^{+}}$; we want to compute the following limit:
\[
\lim_{x\to k^{-}}\underbrace{\Biggl\lfloor x\Bigl\lfloor x \lfloor\dots\rfloor\Bigr\rfloor\Biggr\rfloor}_{\text{$n$ times}}
\]
by varying $k,n$.
\subsection{Example} \label{ex1}
Let $k=3$ and $n=2$, we want to compute:
\[
\lim_{x\to 3^{-}}\Bigl\lfloor x \lfloor x\rfloor\Bigr\rfloor
\]
The answer of this case is 5, and it's possible to prove it using the definition of limit: \\
$\forall\varepsilon>0\exists\delta>0$ such that:
\[
\Bigl|\Bigl\lfloor x \lfloor x\rfloor\Bigr\rfloor-5\Bigr|<\varepsilon \mbox{ $\forall x\in(3-\delta,3)$}
\]
Using $\delta=\frac{1}{2}$, we have $x\in(\frac{5}{2},3)$. So:
\[
\frac{5}{2}<x<3 \Rightarrow \lfloor x \rfloor = 2
\]
And then, by replacying it in our definition we have:
\[
\Bigl|\lfloor 2x \rfloor - 5\Bigr|<\varepsilon
\]
But furthermore since $\frac{5}{2}<x<3$ we know that:
\[
\frac{5}{2}<x<3 \Rightarrow 5<2x<6 \Rightarrow \lfloor 2x \rfloor = 5
\]
and finally:
\[
|5-5|=0<\varepsilon \mbox{ $\forall\varepsilon\in\mathbb{R^{+}}$}
\]
So:
\[
\lim_{x\to 3^{-}}\Bigl\lfloor x \lfloor x\rfloor\Bigr\rfloor = 5
\]
\section{First generalization}
In this section we will generalize the example \ref{ex1}.
\begin{theorem} \label{t1}
Let $k\in\mathbb{Z^{+}}\setminus\{1\}$ and $n\in\mathbb{Z^{+}}$:
\[
\lim_{x\to k^{-}}\underbrace{\Biggl\lfloor x\Bigl\lfloor x \lfloor\dots\rfloor\Bigr\rfloor\Biggr\rfloor}_{\text{$n$ times}}=\frac{1}{k-1}\Bigl[(k-2)\cdot k^{n}+1\Bigr]
\]
\end{theorem}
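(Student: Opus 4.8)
The plan is to argue by induction on $n$, tracking the stronger statement that $f_n$ is \emph{constant} on some left-neighborhood of $k$ (not merely that the limit exists). Write $a_n$ for the claimed value of the limit. For the base case $n=1$, since $x\to k^-$ eventually forces $x\in(k-1,k)$, we get $f_1(x)=\lfloor x\rfloor=k-1$ on all of $(k-1,k)$, so $a_1=k-1$, which is a positive integer because $k\geq 2$.

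For the inductive step, suppose $f_{n-1}(x)\equiv a_{n-1}$ on some interval $(k-\delta,k)$ with $a_{n-1}$ a positive integer. Then on that interval $f_n(x)=\lfloor x\,a_{n-1}\rfloor$, and as $x$ ranges over $(k-\delta,k)$ the product $x\,a_{n-1}$ ranges over $\bigl((k-\delta)a_{n-1},\,k\,a_{n-1}\bigr)$. Since $k\,a_{n-1}\in\mathbb{Z}^+$, for every $x$ in the smaller interval $\bigl(k-\min(\delta,\tfrac{1}{a_{n-1}}),\,k\bigr)$ we have $x\,a_{n-1}\in(k\,a_{n-1}-1,\,k\,a_{n-1})$, hence $\lfloor x\,a_{n-1}\rfloor=k\,a_{n-1}-1$. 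Thus $f_n$ is constant on a left-neighborhood of $k$ with value $a_n:=k\,a_{n-1}-1$, and $a_n\geq k\cdot 1-1=k-1\geq 1$ is again a positive integer, so the induction closes and in particular $\lim_{x\to k^-}f_n(x)=a_n$. The positivity of $a_{n-1}$ is the point that really makes this work: it ensures $x\,a_{n-1}$ approaches its limit \emph{from below}, so the floor drops by exactly $1$; this is exactly the phenomenon illustrated in Example~\ref{ex1}.

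It then remains to solve the recurrence $a_1=k-1$, $a_n=k\,a_{n-1}-1$. This is a linear first-order recurrence with fixed point $\tfrac{1}{k-1}$; subtracting it gives $a_n-\tfrac{1}{k-1}=k\bigl(a_{n-1}-\tfrac{1}{k-1}\bigr)$, so $a_n-\tfrac{1}{k-1}=k^{n-1}\bigl(a_1-\tfrac{1}{k-1}\bigr)=k^{n-1}\cdot\tfrac{k(k-2)}{k-1}=\tfrac{k^n(k-2)}{k-1}$. Therefore $a_n=\tfrac{1}{k-1}\bigl[(k-2)k^n+1\bigr]$, as claimed; the degenerate case $k=2$, where the bracket collapses to $1$, is consistent with $f_n\equiv 1$ near $2^-$.

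I do not expect a genuine obstacle, since the argument is elementary; the only place demanding care is the bookkeeping of the successively shrinking neighborhoods and, relatedly, verifying at each stage that $a_{n-1}>0$. If that positivity ever failed, $x\,a_{n-1}$ would approach its limit from above, the floor would not drop, and the recurrence — hence the closed form — would be different.
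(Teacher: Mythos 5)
Your proof is correct, and at its core it is the same induction the paper runs: both arguments show that $f_n$ is actually \emph{constant} on a shrinking left-neighborhood of $k$, using that the inner nest has already stabilized to a positive integer there. The difference is in how the value of that constant is computed. The paper substitutes the claimed closed form $a_n=\tfrac{1}{k-1}[(k-2)k^n+1]$ directly into the inductive step and verifies by explicit inequalities that $\lfloor a_n x\rfloor=a_{n+1}$ on $(k-\delta_{n+1},k)$; this forces it to prove a separate divisibility lemma (that $(k-2)k^{n+1}+1$ and $k^{n+2}-2k^{n+1}+1$ are multiples of $k-1$) so that the bounding quantities $n_1,n_2$ are genuinely integers. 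You instead extract the abstract recurrence $a_n=k\,a_{n-1}-1$ from the single observation that $x\,a_{n-1}$ approaches the integer $k\,a_{n-1}$ from below, and only afterwards solve the recurrence to get the closed form. This buys you two things: the integrality of every $a_n$ is automatic (an integer recurrence with integer seed), so the paper's lemma becomes unnecessary; and the bookkeeping of the neighborhoods reduces to the clean choice $\min(\delta,1/a_{n-1})$ rather than the explicit $\delta_n=\tfrac{k-1}{(k-2)k^{n-1}+1}$. Your remark about why positivity of $a_{n-1}$ matters (the floor drops by exactly $1$ only because the product approaches its integer limit from below) correctly isolates the one point where the argument could fail, and your check of the degenerate case $k=2$ is consistent with the formula.
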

\begin{proof}
It is sufficient to prove that $\forall k\in\mathbb{Z^{+}}\setminus\{1\}$ and $\forall n\in\mathbb{Z^{+}}$ $\exists\delta_n=\frac{k-1}{(k-2)k^{n-1}+1}$ such that
\[
\underbrace{\Biggl\lfloor x\Bigl\lfloor x \lfloor\dots\rfloor\Bigr\rfloor\Biggr\rfloor}_{\text{$n$ times}}=\frac{1}{k-1}\Bigl[(k-2)\cdot k^{n}+1\Bigr] \mbox{ $\forall x\in (k-\delta_n,k)$}
\]
The base case is when $n=1$. We have that:
\[
\lfloor x \rfloor = k-1 \mbox{ $\forall x\in(k-\frac{1}{k-1},k)$}
\]
and this identity is true because $k$ is a positive integer. \\
Suppose that this identity holds for $n$ and we'll prove it right for $n+1$. \\
Since $k^{n}>k^{n-1} \forall n\in\mathbb{Z^{+}}\forall k\in\mathbb{Z^{+}}\setminus\{1\}$, consider:
\[
\delta_{n+1}=\frac{k-1}{(k-2)k^{n}+1}<\frac{k-1}{(k-2)k^{n-1}+1}=\delta_n
\]
we want to show that:
\[
\underbrace{\Biggl\lfloor x\Bigl\lfloor x \lfloor\dots\rfloor\Bigr\rfloor\Biggr\rfloor}_{\text{$n+1$ times}}=\frac{1}{k-1}\Bigl[(k-2)\cdot k^{n+1}+1\Bigr] \mbox{ $\forall x\in (k-\delta_{n+1},k)$}
\]
but since $\delta_{n+1}<\delta_n$ we have that:
\[
\underbrace{\Biggl\lfloor x\Bigl\lfloor x \lfloor\dots\rfloor\Bigr\rfloor\Biggr\rfloor}_{\text{$n+1$ times}}=\Biggl\lfloor\frac{1}{k-1}\Bigl[(k-2)\cdot k^{n}+1\Bigr]x\Biggr\rfloor \mbox{ $\forall x\in I=(k-\delta_{n+1},k)\subset(k-\delta_n,k)$}
\]
Furthermore:
\[
\Biggl\lfloor\frac{1}{k-1}\Bigl[(k-2)\cdot k^{n}+1\Bigr]x\Biggr\rfloor=\frac{1}{k-1}\Bigl[(k-2)\cdot k^{n+1}+1\Bigr] \mbox{ $\forall x\in I=(k-\delta_{n+1},k)$}
\]
This is true in fact:
\[
I=(k-\delta_{n+1},k)=\Biggl(\frac{k^{n+2}-2k^{n+1}+1}{k^{n+1}-2k^{n}+1},k\Biggr)
\]
So:
\[
\frac{k^{n+2}-2k^{n+1}+1}{k^{n+1}-2k^{n}+1}<x<k \Rightarrow k^{n+2}-2k^{n+1}+1<(k^{n+1}-2k^{n}+1)x<k^{n+2}-2k^{n+1}+k
\]
\[
(k-2)k^{n+1}+1<[(k-2)k^{n}+1]x<(k-2)k^{n+1}+k
\]
And dividing all by $\frac{1}{k-1}$, we'll get:
\[
\underbrace{\frac{(k-2)k^{n+1}}{k-1}+\frac{1}{k-1}}_{\text{$n_1\in\mathbb{N}$}}<\frac{1}{k-1}\Bigl[(k-2)k^{n}+1\Bigr]x<\underbrace{\frac{(k-2)k^{n+1}}{k-1}+\frac{k}{k-1}}_{\text{$n_2\in\mathbb{N}$}}
\]
Call $n_1$ the left hand side of the inequality and $n_2$ the right hand side. We'll prove that they are in fact natural numbers in the next section. Note that:
\[
0<n_2-n_1=1
\]
So the number $\frac{1}{k-1}\Bigl[(k-2)k^{n}+1\Bigr]x$ is strictly between $n_1,n_2$ ,which are positive integers whose distance between each others is equal to $1$. So the floor of 
\[
\frac{1}{k-1}\Bigl[(k-2)k^{n}+1\Bigr]x
\]
must be equal to $n_1$ (which is the nearest integer less than or equal to \\
$\frac{1}{k-1}\Bigl[(k-2)k^{n}+1\Bigr]x$). \\
Let $\delta_n=\frac{k-1}{(k-2)k^{n-1}+1}$. We want to show that:
\[
\Biggl|\underbrace{\Biggl\lfloor x\Bigl\lfloor x \lfloor\dots\rfloor\Bigr\rfloor\Biggr\rfloor}_{\text{$n$ times}}-\frac{1}{k-1}\Bigl[(k-2)\cdot k^{n}+1\Bigr]\Biggr|<\varepsilon \mbox{ $\forall x\in(k-\delta_{n},k)$}
\]
But now this is obvious because:
\[
\underbrace{\Biggl\lfloor x\Bigl\lfloor x \lfloor\dots\rfloor\Bigr\rfloor\Biggr\rfloor}_{\text{$n$ times}}=\frac{1}{k-1}\Bigl[(k-2)\cdot k^{n}+1\Bigr] \mbox{ $\forall x\in (k-\delta_n,k)$}
\]
And finally $\forall\varepsilon>0$:
\[
\Biggl|\underbrace{\Biggl\lfloor x\Bigl\lfloor x \lfloor\dots\rfloor\Bigr\rfloor\Biggr\rfloor}_{\text{$n$ times}}-\frac{1}{k-1}\Bigl[(k-2)\cdot k^{n}+1\Bigr]\Biggr|=0<\varepsilon \mbox{ $\forall x\in(k-\delta_{n},k)$}
\]
And the thesis follows from the limit definition.
\end{proof}
\begin{lemma}
$n_1$ and $n_2$ defined in Theorem \ref{t1} are positive integers. \\
We want to prove that:
\[
(k-2)k^{n+1}+1\equiv 0 \mod (k-1)
\]
\[
k^{n+2}-2k^{n+1}+1\equiv 0\mod (k-1)
\]
$\forall k\in\mathbb{Z^{+}}\setminus\{1\}$ and $n\in\mathbb{Z^{+}}$. 
\end{lemma}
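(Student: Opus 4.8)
The plan is to exploit the single elementary fact that $k\equiv 1\pmod{k-1}$, which holds because $k-(k-1)=1$. Raising to powers, this immediately gives $k^{m}\equiv 1\pmod{k-1}$ for every integer $m\geq 0$ (by an easy induction, or directly since $a\equiv b$ implies $a^m\equiv b^m$). So the first step is to record this congruence; everything else is substitution.

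Next I would observe that the two displayed expressions are in fact the same polynomial in $k$: expanding, $(k-2)k^{n+1}+1 = k^{n+2}-2k^{n+1}+1$, so it suffices to handle one of them. Reducing modulo $k-1$ and using $k^{n+1}\equiv 1$ and $k^{n+2}\equiv 1$, I get
\[
(k-2)k^{n+1}+1 \equiv (k-2)\cdot 1 + 1 = k-1 \equiv 0 \pmod{k-1},
\]
and equivalently
\[
k^{n+2}-2k^{n+1}+1 \equiv 1 - 2 + 1 = 0 \pmod{k-1}.
\]
This shows both expressions are divisible by $k-1$, hence $n_1=\frac{(k-2)k^{n+1}+1}{k-1}$ and $n_2=\frac{k^{n+2}-2k^{n+1}+k}{k-1}$ are integers; since they are quotients of positive quantities by the positive integer $k-1$, they are positive integers, as claimed in Theorem \ref{t1}.

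Honestly, there is no real obstacle here: the only thing to be slightly careful about is the edge case $k=2$, where $k-1=1$ and every integer is trivially $\equiv 0\pmod 1$, so the statement is vacuously fine and $n_1,n_2$ are just $(k-2)k^{n+1}+1=1$ and $k^{n+2}-2k^{n+1}+k=2$ respectively — still positive integers. I would also double-check the indexing to make sure this matches exactly the two quantities underbraced as $n_1,n_2$ in the proof of Theorem \ref{t1}, namely $\frac{(k-2)k^{n+1}}{k-1}+\frac{1}{k-1}$ and $\frac{(k-2)k^{n+1}}{k-1}+\frac{k}{k-1}$, which are precisely the fractions above after combining terms. With that verified, the lemma is complete.
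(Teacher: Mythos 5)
Your proof is correct, but it takes a different route from the paper's. The paper establishes divisibility by exhibiting an explicit factorization,
\[
k^{n+2}-2k^{n+1}+1=(k-1)(k^{n+1}-k^{n}-k^{n-1}-\dots-k-1),
\]
and then observes that the numerator of $n_2$ exceeds that of $n_1$ by exactly $k-1$, so both are multiples of $k-1$. You instead reduce everything modulo $k-1$ using the single congruence $k\equiv 1\pmod{k-1}$, which collapses both expressions to $0$ in two lines. Your approach is the more standard and arguably cleaner one: it avoids verifying a telescoping product and makes the divisibility transparent; it also lets you note immediately that the two displayed expressions are literally the same polynomial, something the paper does not remark on. What the paper's factorization buys in exchange is an explicit formula for the quotient $n_1=k^{n+1}-k^{n}-\dots-k-1$ (plus $1/(k-1)\cdot 0$ correction), which could in principle be reused elsewhere, though the paper does not actually use it. Your handling of positivity and of the $k=2$ edge case is a welcome addition that the paper omits. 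Both arguments are sound; yours is essentially a modular-arithmetic restatement of the same divisibility fact.
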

\begin{proof}
Note that:
\[
k^{n+2}-2k^{n+1}+1=(k-1)(k^{n+1}-k^{n}-k^{n-1}-\dots-k-1)
\]
So the numerator of $n_1$ is a multiple of $(k-1)$. Since the numerator of $n_2$ is equal to the numerator of $n_2$ plus $k-1$ we conclude that also $n_2$ is an integer (because its numerator is again a multiple of $k-1$).
\end{proof}
\section{Jump discontinuities}
Let $f(x)$ be the function:
\[
f_{n}(x) = \underbrace{\Biggl\lfloor x\Bigl\lfloor x \lfloor\dots\rfloor\Bigr\rfloor\Biggr\rfloor}_{\text{$n$ times}}
\]
Note that:
\[
\lim_{x\to k^{-}} f_{n}(x) = \frac{1}{k-1}\Bigl[(k-2)\cdot k^{n}+1\Bigr] \mbox{ while } \lim_{x\to k^{+}} f_{n}(x) = k^{n} \mbox{ $\forall k\in\mathbb{Z^{+}}$}
\]
From these relations we know that $x = k$ is a a point of discontinuity of the first kind $\forall k\in\mathbb{Z^{+}}$ with jump's length equal to:
\[
|J(k,f_n)|=k^{n} - \frac{1}{k-1}\Bigl[(k-2)\cdot k^{n}+1\Bigr] = \frac{k^{n}-1}{k-1}
\]
But the function $f_{n}(x)$ has more jump discontinuities than these. For example:
\[
f_2(x)=\Bigl\lfloor x \lfloor x \rfloor\Bigr\rfloor
\]
has a jump discontinuity in $x=\frac{10}{3}$, in fact:
\[
\lim_{x\to \frac{10}{3}^{-}} \Bigl\lfloor x \lfloor x \rfloor\Bigr\rfloor = 9 \mbox{ while } \lim_{x\to \frac{10}{3}^{+}} \Bigl\lfloor x \lfloor x \rfloor\Bigr\rfloor = 10
\]
As you can see from these graphs:
\begin{figure}[h!]
  \centering
\begin{subfigure}[b]{0.4\linewidth}
    \includegraphics[width=\linewidth]{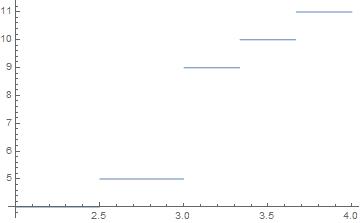}
    \caption{Graph of $f_2(x)$ in $I=[2,4]$}
  \end{subfigure}
\begin{subfigure}[b]{0.4\linewidth}
    \includegraphics[width=\linewidth]{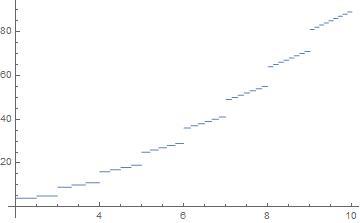}
    \caption{Graph of $f_2(x)$ in $I=[2,10]$}
  \end{subfigure}
  \caption{Graph of $f_2(x)$}
  \label{fig:coffee}
\end{figure}
\begin{remark}
Let $f_1(x)=\lfloor x \rfloor$, $P(a,b,f_1)$ the set of discontinuity points of $f_1(x)$ in the interval $[a,b)$ and $P(f_1)$ the set of all discontinuity points over the domain $x\geq 1$. Then:
\[
P(f_1) = \mathbb{Z^{+}}
\]
In fact every $x = k$ (where $k\in\mathbb{Z^{+}}$) is a jump discontinuity for $f_1$, where:
\[
J(k,f_1)=1 \mbox{ $\forall k\in P(f_1)$}
\]
\end{remark}
\begin{theorem} \label{t4}
Let $f_2(x)$ be defined as before, $P(a,b,f_2)$ the set of discontinuity points of $f_2(x)$ in the open interval $[a,b)$ and $P(f_2)$ the set of all discontinuity points over the domain $x\geq 1$. Then:
\[
P(f_2) = \bigcup_{k=1}^{+\infty}P(k,k+1,f_2) = \bigcup_{k=1}^{+\infty}\Bigl\{k,k+\frac{1}{k},k+\frac{2}{k},\dots,k+\frac{k-1}{k}\Bigr\}
\]
Or:
\[
P(f_2) = \bigcup_{k=1}^{+\infty}\bigcup_{r=0}^{k-1}\Bigl\{k+\frac{r}{k}\Bigr\}
\]
where:
\[
|J(k,f_2)| = \frac{k^{2}-1}{k-1}=k+1 \mbox{ $\forall k\in\mathbb{Z^{+}}$}
\]
and:
\[
\Bigl|J\Bigl(k+\frac{r}{k}, f_2\Bigr)\Bigr| = 1 \mbox{ $\forall k\in\mathbb{Z^{+}}$ where $r\in\{1,2,\dots,k-1\}$}
\]
\end{theorem}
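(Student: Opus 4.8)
The plan is to break the study of $f_2$ into the intervals on which the inner floor is constant and then treat each piece as an ordinary one–variable step function. Fix $k\in\mathbb{Z^{+}}$ and restrict $x$ to $[k,k+1)$; there $\lfloor x\rfloor=k$, so $f_2(x)=\lfloor kx\rfloor$ on the whole interval. The map $x\mapsto kx$ is continuous and strictly increasing and sends $[k,k+1)$ onto $[k^{2},k^{2}+k)$, hence $\lfloor kx\rfloor$ is locally constant except precisely where $kx\in\mathbb{Z}$, i.e.\ at $x=\frac{k^{2}+r}{k}=k+\frac{r}{k}$ for $r=0,1,\dots,k-1$. This already yields the inclusion
\[
P(k,k+1,f_2)\subseteq\Bigl\{k+\tfrac{r}{k}:r=0,1,\dots,k-1\Bigr\},
\]
and since the two set-builder expressions for $P(f_2)$ in the statement are just two ways of writing the same set, what is left is the reverse inclusion together with the two jump-length formulas.

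For an \emph{interior} candidate point fix $r\in\{1,\dots,k-1\}$ and stay inside $[k,k+1)$, where $f_2=\lfloor kx\rfloor$. As $x\to\bigl(k+\tfrac{r}{k}\bigr)^{-}$ we get $kx\to(k^{2}+r)^{-}$, so $f_2(x)=k^{2}+r-1$, while as $x\to\bigl(k+\tfrac{r}{k}\bigr)^{+}$ we get $f_2(x)=k^{2}+r$. Thus $x=k+\frac{r}{k}$ is a jump discontinuity of the first kind with $\bigl|J\bigl(k+\tfrac{r}{k},f_2\bigr)\bigr|=1$, which settles the reverse inclusion at every non-integer point and proves the second jump formula.

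For the \emph{integer} points $x=k$ one cannot use $\lfloor kx\rfloor$ on both sides: the left limit must be computed inside $[k-1,k)$, where $f_2(x)=\lfloor(k-1)x\rfloor$, not inside $[k,k+1)$. The right-hand side is immediate, since $f_2$ is right-continuous at $k$ with $f_2(k)=\lfloor k^{2}\rfloor=k^{2}$, so $\lim_{x\to k^{+}}f_2(x)=k^{2}$. The left-hand limit is exactly Theorem~\ref{t1} with $n=2$, namely $\lim_{x\to k^{-}}f_2(x)=\frac{1}{k-1}\bigl[(k-2)k^{2}+1\bigr]$. Subtracting gives
\[
|J(k,f_2)|=k^{2}-\frac{1}{k-1}\bigl[(k-2)k^{2}+1\bigr]=\frac{k^{2}-1}{k-1}=k+1,
\]
as claimed. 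Taking the union over all $k\in\mathbb{Z^{+}}$ of the sets $P(k,k+1,f_2)$ — the intervals $[k,k+1)$ being pairwise disjoint with union $[1,\infty)$ — then produces $P(f_2)$ in the stated form.

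The only real subtlety I expect is the bookkeeping at the gluing points $x=k$: one must attribute the jump located exactly at an integer to the correct sub-interval and evaluate its one-sided limits on the correct sides (left via $\lfloor(k-1)x\rfloor$, using Theorem~\ref{t1}; right via $\lfloor kx\rfloor$), and one must treat the degenerate value $k=1$ by hand. For $k=1$ the interval $[1,2)$ contributes only $x=1$, where $f_2\equiv 0$ on $(0,1)$ while $f_2(1)=1$, giving a jump of length $1$ (consistent with the remark on $f_1$); the closed form $\frac{k^{2}-1}{k-1}=k+1$ is therefore to be read for $k\ge 2$. Everything else is the routine verification that $\bigcup_{k\ge 1}\bigcup_{r=0}^{k-1}\{k+\frac{r}{k}\}$ coincides with the first displayed description of $P(f_2)$.
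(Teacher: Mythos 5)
Your proposal is correct and follows essentially the same route as the paper: on each interval $[k,k+1)$ you reduce $f_2$ to $\lfloor kx\rfloor$, read off the interior jumps of size $1$ at $x=k+\frac{r}{k}$, and handle the integer points via Theorem~\ref{t1} with $n=2$ for the left limit and $k^{2}$ for the right limit. Your completeness step (the discontinuities of $\lfloor kx\rfloor$ on $[k,k+1)$ are exactly the preimages of integers under the increasing continuous map $x\mapsto kx$) is a slightly tidier packaging of the paper's interval-by-interval constancy argument, and your explicit caveat that the formula $|J(k,f_2)|=k+1$ must be read for $k\ge 2$ (the jump at $x=1$ being $1$, not $2$) is a point the paper overlooks.
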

\subsubsection{Examples}
For example, consider the function $f_2(x)$ and the interval $[4,5)$. Assuming true Theorem 2 we know that:
\[
P(4,5,f_2)=\Bigl\{4,\frac{17}{4},\frac{9}{2},\frac{19}{4}\Bigr\}
\]
In fact:
\[
\lim_{x\to 4^{-}}f_2(x)=11 \mbox{ $\wedge$ } \lim_{x\to 4^{+}}f_2(x)=16
\]
and:
\[
|J(4,f_2)| = 4+1 = 5 \mbox{ the jump is in fact: $16-11$}
\]
While:
\[
\lim_{x\to \frac{17}{4}^{-}}f_2(x)=16 \mbox{ $\wedge$ } \lim_{x\to \frac{17}{4}^{+}}f_2(x)=17
\]
\[
\lim_{x\to \frac{9}{2}^{-}}f_2(x)=17 \mbox{ $\wedge$ } \lim_{x\to \frac{9}{2}^{+}}f_2(x)=18
\]
\[
\lim_{x\to \frac{19}{4}^{-}}f_2(x)=18 \mbox{ $\wedge$ } \lim_{x\to \frac{19}{4}^{+}}f_2(x)=19
\]
So:
\[
\Bigl|J\Bigl(\frac{17}{4},f_2\Bigr)\Bigr| = \Bigl|J\Bigl(\frac{9}{2},f_2\Bigr)\Bigr| = \Bigl|J\Bigl(\frac{19}{4},f_2\Bigr)\Bigr| = 1
\]
As you can see from this image:
\begin{figure}[h]
    \centering
    \includegraphics[width=3cm]{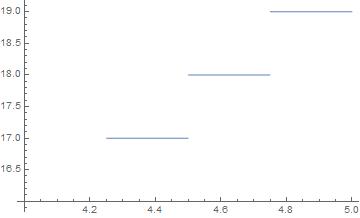}
    \caption{Graph of $f_2(x)$ in $I=[4,5)$}
    \label{fig:4.2}
\end{figure}
\begin{proof}
We've already proved that $ x = k$ is a discontinuity point $\forall k\in\mathbb{Z^{+}}$, we should prove that:
\[
\Bigl\{k+\frac{1}{k},\dots,k+\frac{k-1}{k}\Bigr\}_{k=1}^{k=+\infty} \mbox{ are the only others discontinuity points of $f_2$}
\]
So we're proving that:
\[ 
\bigcup_{k=1}^{+\infty}\Bigl\{k,k+\frac{1}{k},k+\frac{2}{k},\dots,k+\frac{k-1}{k}\Bigr\} \subseteq P(f_2) \wedge P(f_2) \subseteq \bigcup_{k=1}^{+\infty}\Bigl\{k,k+\frac{1}{k},k+\frac{2}{k},\dots,k+\frac{k-1}{k}\Bigr\}
\]
Let:
\[
\lim_{x\to (k+\frac{r}{k})^{-}} f_2(x) = L(k,r)^{-} \mbox{ and } \lim_{x\to (k+\frac{r}{k})^{+}} f_2(x) = L(k,r)^{+}
\]
Then we'll prove that $\forall r\in\{1,2,\dots,k-1\}$:
\[
L(k,r)^{-}=k^{2}+r-1 \wedge L(k,r)^{+}=k^{2}+r
\]
Using $\delta = \frac{1}{k}$ we have that:
\[
\Biggl|\Bigl\lfloor x \lfloor x \rfloor\Bigr\rfloor - (k^{2}+r-1)\Biggr|<\varepsilon \mbox{ $\forall x\in (k+\frac{r-1}{k},k+\frac{r}{k})$}
\]
We know that:
\[
k+\frac{r-1}{k}<x<k+\frac{r}{k} \Rightarrow \lfloor x \rfloor = k \mbox{ $\forall x \in (k+\frac{r-1}{k},k+\frac{r}{k})$}
\]
And:
\[
k+\frac{r-1}{k}<x<k+\frac{r}{k} \Rightarrow k^{2}+r-1<kx<k^{2}+r \Rightarrow \lfloor kx \rfloor = k^{2}+r-1
\]
So:
\[
\Biggl|\Bigl\lfloor x \lfloor x \rfloor\Bigr\rfloor - (k^{2}+r-1)\Biggr| = \Biggl|\Bigl\lfloor kx \Bigr\rfloor - (k^{2}+r-1)\Biggr| = \Biggl|k^{2}+r-1 - (k^{2}+r-1)\Biggr|=0<\varepsilon
\]
Similarly, using $\delta=\frac{1}{k}$ again it's possible to prove with the same technique that:
\[
\Biggl|\Bigl\lfloor x \lfloor x \rfloor\Bigr\rfloor - (k^{2}+r)\Biggr|<\varepsilon \mbox{ $\forall x\in (k+\frac{r}{k},k+\frac{r+1}{k})$}
\]
We prove the first inequality. In order to prove the second inequality it's sufficient to note that $f_2$ is a monotone increasing function over its domain and that $f_2(x)\in\mathbb{N} , \forall x\geq 1$. Furthermore:
\[
\Bigl|J\Bigl(k+\frac{r}{k},f_2\Bigr)\Bigr|=1 \mbox{ $\forall k\in\mathbb{Z^{+}}$ where $r\in\{1,2,\dots,k-1\}$}
\]
Since:
\[
L(k,r)^{-}=k^{2}+r-1 \wedge L(k.r)^{+}=k^{2}+r
\]
So we have that:
\[
k^{2}+r-1\leq f_2(x) < k^{2}+r \mbox{ $\forall x \in\Bigl[k+\frac{r-1}{k},k+\frac{r}{k}\Bigr)$}
\]
But $f_2(x)\in\mathbb{N}$, so the function in that interval is constant, and is equal to:
\[
f_2(x)=k^{2}+r-1 \mbox{ $\forall x \in\Bigl[k+\frac{r-1}{k},k+\frac{r}{k}\Bigr)$}
\]
Finally we prove that every discontinuity point of $f_2$ are elements of the following set:
\[
P(f_2) = \bigcup_{k=1}^{+\infty}\Bigl\{k,k+\frac{1}{k},k+\frac{2}{k},\dots,k+\frac{k-1}{k}\Bigr\}
\]
Or:
\[
P(f_2) = \bigcup_{k=1}^{+\infty}\bigcup_{r=0}^{k-1}\Bigl\{k+\frac{r}{k}\Bigr\}
\]
\end{proof}
\subsection{First considerations}
The set of discontinuity points of $f_2(x)$ is a countable set. In fact it's the countable union of countable sets. \cite{1} So:
\[
|P(f_2)| = |\mathbb{N}| = \aleph_0
\]
Let $h\in\mathbb{Z^{+}}\setminus\{1\}$ , the cardinality of the finite set defined as:
\[
|P(1,h,f_2)|= \Bigl|\bigcup_{k=1}^{h-1}\Bigl\{k,k+\frac{1}{k},k+\frac{2}{k},\dots,k+\frac{k-1}{k}\Bigr\}\Bigl|
\]
is equal to:
\[
|P(1,h,f_2)|=1+2+3+\dots+h-1 =\frac{h(h-1)}{2}
\]
\subsection{Generalizations}
If we consider the functions $f_3,f_4,f_5,\dots,f_n$ it's easy to see that there are more and more discontinuity points as $n$ increases. Back to the $f_2$ case, it's possible to construct a partition of a generic interval $I=[a,b)$ , made of the discontinuity points of $f_2$ in that interval. For example, let $I=[3,4)$, then:
\[
P(3,4,f_2)=\Bigl\{3,3+\frac{1}{3},3+\frac{2}{3}\Bigr\}
\]
while we'll prove that:
\[
P(3,4,f_3)=\Bigl\{3,3+\frac{1}{9},3+\frac{2}{9},3+\frac{1}{3},3+\frac{2}{5},3+\frac{1}{2},3+\frac{3}{5},3+\frac{2}{3},3+\frac{8}{11},3+\frac{9}{11},3+\frac{10}{11}\Bigr\}
\]
Note that:
\[
P(3,4,f_2)\subset P(3,4,f_3) \mbox{ $\wedge$ } |P(3,4,f_2)|<|P(3,4,f_3)|
\]
\begin{theorem} \label{t5}
Let $P(f_n)$ denotes the set of the discontinuity points of the function $f_n$. Then:
\[
P(f_1)\subset P(f_2) \subset\dots\subset P(f_n) \mbox{ $\forall n\in\mathbb{Z^{+}}$}
\]
\end{theorem}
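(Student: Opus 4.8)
The plan is to exploit the recursion $f_{n+1}(x)=\bigl\lfloor x\, f_n(x)\bigr\rfloor$, which expresses each function in terms of the previous one, so that the whole question reduces to the local behaviour of $f_n$ near a single point. First I would record the elementary structural facts, all proved by induction on $n$ from the recursion: for $x\ge 1$ the function $f_n$ is integer-valued, satisfies $f_n(x)\ge 1$, is non-decreasing, and is right-continuous (the last because $x\,f_n(x)$ is non-decreasing and right-continuous, and the floor of a non-decreasing right-continuous function is again right-continuous). Monotonicity guarantees that both one-sided limits exist at every point $x_0>1$; write $L^-:=\lim_{x\to x_0^-}f_n(x)$ and $L^+:=f_n(x_0)=\lim_{x\to x_0^+}f_n(x)$. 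Since $f_n$ is integer-valued and $L^-$ is a limit of its values, $f_n\equiv L^-$ on some left neighbourhood $(x_0-\delta,x_0)$, and by right-continuity $f_n\equiv L^+$ on some right neighbourhood $[x_0,x_0+\delta)$; moreover $L^-\ge 1$ once $x_0>1$.

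The core step is to compute the one-sided limits of $f_{n+1}$ at a jump point $x_0>1$ of $f_n$ (so that $L^+\ge L^-+1$). On the left neighbourhood $f_{n+1}(x)=\lfloor L^- x\rfloor$, and since $L^-\ge 1$ the quantity $L^- x$ increases to $x_0L^-$ strictly from below, whence $\lim_{x\to x_0^-}f_{n+1}(x)=\lceil x_0L^-\rceil-1$; on the right neighbourhood $f_{n+1}(x)=\lfloor L^+ x\rfloor$, so $\lim_{x\to x_0^+}f_{n+1}(x)=\lfloor x_0L^+\rfloor$. The jump of $f_{n+1}$ at $x_0$ therefore equals $\lfloor x_0L^+\rfloor-\lceil x_0L^-\rceil+1$, and it remains to see this is $\ge 1$, i.e.\ $\lfloor x_0L^+\rfloor\ge\lceil x_0L^-\rceil$. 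Writing $x_0L^+=x_0L^-+x_0(L^+-L^-)$ and using $\lfloor a+b\rfloor\ge\lfloor a\rfloor+\lfloor b\rfloor$ together with $x_0(L^+-L^-)\ge x_0\ge 1$, one gets $\lfloor x_0L^+\rfloor\ge\lfloor x_0L^-\rfloor+\lfloor x_0\rfloor\ge\lfloor x_0L^-\rfloor+1\ge\lceil x_0L^-\rceil$, so $x_0$ is a jump discontinuity of $f_{n+1}$. The boundary point $x_0=1$ is trivial: $f_n\equiv 0$ on $(0,1)$ and $f_n(1)=1$ for every $n$, so $1$ lies in every $P(f_n)$. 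Hence $P(f_n)\subseteq P(f_{n+1})$ for all $n$, and iterating gives the chain of inclusions.

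Finally, for \emph{strict} inclusion I would exhibit, for each $n$, a discontinuity of $f_{n+1}$ that is not one of $f_n$, using Theorem \ref{t1}. Near $x=3$ that theorem gives $f_n\equiv L^-=(3^n+1)/2$ on $(3-\delta_n,3)$ with $\delta_n=2/(3^{n-1}+1)$; here $L^-$ is a positive integer and $L^-\delta_n=(3^n+1)/(3^{n-1}+1)>1$, so the image of $(3-\delta_n,3)$ under $x\mapsto L^- x$ is the open interval $\bigl(L^-(3-\delta_n),\,3L^-\bigr)$ of length greater than $1$, which therefore contains an integer $m$. Then $x^{\ast}=m/L^-$ lies in $(3-\delta_n,3)$, and on that interval $f_{n+1}(x)=\lfloor L^- x\rfloor$ jumps from $m-1$ to $m$ at $x^{\ast}$, so $x^{\ast}\in P(f_{n+1})$; but $f_n$ is constant on $(3-\delta_n,3)\ni x^{\ast}$, so $x^{\ast}\notin P(f_n)$. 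Thus $P(f_n)\subsetneq P(f_{n+1})$ for every $n\in\mathbb{Z}^{+}$. The main obstacle I anticipate is not the final inequality, which is short, but cleanly justifying the step-function picture of $f_n$ near $x_0$ — existence and integrality of the one-sided limits and local constancy on each side — and making the strictness construction uniform in $n$ rather than merely checking small cases; the ceiling/floor bookkeeping in the left-hand limit $\lceil x_0L^-\rceil-1$ also needs to be stated with a little care.
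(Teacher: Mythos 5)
Your proposal is correct, and its core argument is the same as the paper's: use the recursion $f_{n+1}(x)=\lfloor x\,f_n(x)\rfloor$, observe that a monotone integer-valued $f_n$ is constant equal to $L^-$ and $L^+$ on a left and right neighbourhood of a jump point $x_0$, and then show $\lim_{x\to x_0^+}f_{n+1}(x)>\lim_{x\to x_0^-}f_{n+1}(x)$. Your bookkeeping is tidier than the paper's: you identify the one-sided limits exactly as $\lceil x_0L^-\rceil-1$ and $\lfloor x_0L^+\rfloor$ and close the argument with $\lfloor x_0L^+\rfloor\ge\lfloor x_0L^-\rfloor+\lfloor x_0\rfloor\ge\lceil x_0L^-\rceil$, whereas the paper manipulates $\lfloor xL^-+kx\rfloor$ somewhat loosely. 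The genuine difference is that you prove \emph{strictness} of the inclusions inside this proof, while the paper's own proof of this theorem only establishes $P(f_{n-1})\subseteq P(f_n)$ and leaves the strict part to the following theorem, where the witness is $k+\tfrac{1}{k^{n-1}}\in P(f_n)\setminus P(f_{n-1})$. Your witness is different and arguably slicker: since $f_n\equiv L^-=(3^n+1)/2$ on $(3-\delta_n,3)$ with $L^-\delta_n>1$, some $x^\ast=m/L^-$ in that interval is a jump of $f_{n+1}=\lfloor L^-x\rfloor$ but a point of local constancy of $f_n$. Both constructions work; yours has the advantage of making the theorem as stated (with $\subset$ meaning proper inclusion) self-contained, and of resting only on Theorem \ref{t1} rather than on a separate induction.
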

\begin{proof}
We'll prove this result by induction on $n$. \\
The base case ($n=2$) has been already proved before. So we know that \\$P(f_1)\subset P(f_2)$. \\
Suppose that $P(f_1)\subset P(f_2)\subset \dots \subset P(f_{n-1})$. We'll prove that:
\[
P(f_{n-1})\subset P(f_n)
\]
Let $d$ be an element of $P_{n-1}$; consider the following limits:
\[
\lim_{x\to d^{-}} f_{n-1}(x) = L_d^{-} \mbox{ $\wedge$ } \lim_{x\to d^{+}} f_{n-1}(x) = L_d^{+}
\]
From the induction hyphothesis we know that:
\[
L_d^{-}\not=L_d^{+}
\]
So $\exists\delta^{-},\delta^{+}>0$ such that:
\[
\Bigl|f_{n-1}(x)-L_d^{-}\Bigr|=0 \mbox{ $\forall x\in (d-\delta^{-},d)$}
\]
\[
\Bigl|f_{n-1}(x)-L_d^{+}\Bigr|=0 \mbox{ $\forall x\in (d,d+\delta^{+})$}
\]
But from the definition of $f_n$ we have that:
\[
f_n(x)=\lfloor x\cdot f_{n-1}(x) \rfloor
\]
So:
\[
\lim_{x\to d^{-}}f_n(x)=\lim_{x\to d^{-}}\lfloor x\cdot f_{n-1}(x)\rfloor
\]
\[
\lim_{x\to d^{+}}f_n(x)=\lim_{x\to d^{+}}\lfloor x\cdot f_{n-1}(x)\rfloor
\]
But $f_{n-1}=L_d^{-},\forall x\in (d-\delta^{-},d)$ and  $f_{n-1}=L_d^{+},\forall x\in (d,d+\delta^{+})$, so substituing in we will obtein:
\[
\lim_{x\to d^{-}} f_{n}(x) = \lim_{x\to d^{-}}\lfloor x\cdot L_d^{-}\rfloor 
\]
\[
\lim_{x\to d^{+}}f_{n}(x)= \lim_{x\to d^{+}}\lfloor x\cdot L_d^{+}\rfloor
\]
This last step is motivated by using the limit definition. \\
But we know that $L_d^{-},L_d^{+}\in\mathbb{Z^{+}}$ , and since $f_n$ is monotone increasing and $L_d^{-}\not= L_d^{+}$ we can conclude that:
\[
L_d^{+} > L_d^{-} \implies L_d^{+} = L_d^{-} + k \mbox{ for some $k\in\mathbb{Z^{+}}$}
\]
So:
\[
\lim_{x\to d^{-}} f_{n}(x) = \lim_{x\to d^{-}}\lfloor x\cdot L_d^{-}\rfloor 
\]
\[
\lim_{x\to d^{+}}f_{n}(x)= \lim_{x\to d^{+}}\lfloor x\cdot L_d^{-} + kx\rfloor
\]
But using $\delta=\min\{\delta^{-},\delta^{+}\}$ we have:
\[
d<x<d+\delta \implies kd<kx<kd+k\delta
\]
And finally we have that $kx$ in this interval is bigger than $kd$ (which is a positive rational number bigger than or equal to 1). So:
\[
\lim_{x\to d^{+}}f_{n}(x)= \lim_{x\to d^{+}}\lfloor x\cdot L_d^{-} + kx\rfloor = \lim_{x\to d^{+}}\lfloor x\cdot L_d^{-} + \underbrace{kx-\lfloor kx \rfloor}_{\text{$\geq 0$}} \rfloor + \lfloor kx \rfloor
\]
and combining all the inequalities we'll get:
\[
\lim_{x\to d^{+}} f_{n}(x)=\lim_{x\to d^{+}}\lfloor x\cdot L_d^{-} + \underbrace{kx-\lfloor kx \rfloor}_{\text{$\geq 0$}} \rfloor + \lfloor kx \rfloor\geq\lim_{x\to d^{-}}\lfloor x\cdot L_d^{-}\rfloor + \lfloor kd \rfloor > \lim_{x\to d^{-}} f_{n}(x)
\]
\end{proof}
\begin{theorem}
Given the interval $[k,k+1)$ where $k\in\mathbb{Z^{+}}$, then:
\[
\lim_{n\to+\infty} |P(k,k+1,f_n)| = \infty
\]
Where $|P(k,k+1,f_n)|$ represents the cardinality of the set of all the discontinuity points of $f_n$ in the interval $[k,k+1)$.
\end{theorem}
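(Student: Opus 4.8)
The plan is to convert the identity $f_n(x)=\lfloor x\,f_{n-1}(x)\rfloor$ into an exact recursion for the count $c_n:=|P(k,k+1,f_n)|$ and then to read off the bound $c_n\ge k^{n-1}$, which is already enough. Throughout one should assume $k\ge 2$: for $k=1$ the innermost floor on $[1,2)$ is identically $1$, so $f_n\equiv 1$ there for every $n$, $P(1,2,f_n)=\{1\}$, and the limit is $1$ rather than $\infty$ --- a degenerate case I would simply exclude.

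First I would describe the shape of $f_{n-1}$ on $[k,k+1)$. Because $f_{n-1}$ is monotone increasing and integer valued on $x\ge 1$, its discontinuity set in $[k,k+1)$ is finite (finiteness propagates by induction through the recursion below, the base cases being $P(k,k+1,f_1)=\{k\}$ and, from Theorem~\ref{t4}, $|P(k,k+1,f_2)|=k$); write it as $k=a_1<a_2<\dots<a_{c_{n-1}}$ and set $a_{c_{n-1}+1}:=k+1$. On each open piece $I_j=(a_j,a_{j+1})$ the function $f_{n-1}$ is continuous and integer valued, hence constant, say $f_{n-1}\equiv v_j\in\mathbb{Z}^{+}$; and since $f_{n-1}$ is increasing with $f_{n-1}(k)=k^{n-1}$ (all the nested floors are exact at the integer $k$), we get $v_j\ge k^{n-1}$ for every $j$. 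Writing $\ell_j=a_{j+1}-a_j$ we have $\sum_j \ell_j = 1$.

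Next I would classify the discontinuities of $f_n$ in $[k,k+1)$. A point $d$ there is a discontinuity of $f_n=\lfloor x\,f_{n-1}(x)\rfloor$ in exactly one of two mutually exclusive ways: either $d=a_j$ for some $j$ --- and by Theorem~\ref{t5} every such $a_j$ really is a discontinuity of $f_n$ --- or else $d\in I_j$ for some $j$, in which case $f_n\equiv\lfloor v_j x\rfloor$ in a neighbourhood of $d$, forcing $d=i/v_j$ with $i\in\mathbb{Z}$ and $v_j a_j<i<v_j a_{j+1}$. These two families are disjoint and together exhaust $P(k,k+1,f_n)$, whence
\[
c_n \;=\; c_{n-1}\;+\;\sum_{j=1}^{c_{n-1}}\#\bigl\{\,i\in\mathbb{Z}\ :\ v_j a_j<i<v_j a_{j+1}\,\bigr\}.
\]
Since any open interval $(\alpha,\beta)$ contains at least $\beta-\alpha-1$ integers, the $j$-th summand is $\ge v_j\ell_j-1$; summing over $j$, the $c_{n-1}$ copies of $-1$ cancel the leading term $c_{n-1}$ and leave $c_n\ge\sum_j v_j\ell_j\ge k^{n-1}\sum_j\ell_j=k^{n-1}$. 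For $k\ge 2$ the right-hand side tends to infinity, and since $P(k,k+1,f_n)\subseteq P(k,k+1,f_{n+1})$ by Theorem~\ref{t5} the sequence $(c_n)$ is in addition non-decreasing; hence $\lim_{n\to\infty}c_n=\infty$, as claimed.

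The step I expect to demand the most care is the bookkeeping behind that recursion: one must verify that the two families of points are genuinely disjoint, that together they account for \emph{every} jump of $f_n$ in $[k,k+1)$ (Theorem~\ref{t5} is indispensable here, to guarantee the $a_j$ are not ``healed'' when one passes from $f_{n-1}$ to $f_n$), that no $i/v_j$ happens to coincide with an endpoint $a_\ell$, and that $f_{n-1}$ is truly constant --- and not merely continuous in some weaker sense --- on each open piece $I_j$, which is exactly what monotonicity buys. A small numerical check with $k=3$ is reassuring: there $c_2=3$ and $c_3=11$, both comfortably above the predicted lower bounds $3^{1}=3$ and $3^{2}=9$.
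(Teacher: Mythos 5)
Your argument is correct, and it takes a genuinely different route from the paper. The paper proves the theorem by exhibiting, for each $n\ge 2$, a single explicit new discontinuity, namely $k+\tfrac{1}{k^{n-1}}\in P(k,k+1,f_n)\setminus P(k,k+1,f_{n-1})$ (verified by an induction showing $f_n\equiv k^n$ on $(k,k+\tfrac{1}{k^{n-1}})$ and $\equiv k^n+1$ just to the right); combined with the nesting from Theorem~\ref{t5} this gives a strictly increasing sequence of cardinalities, i.e.\ only the linear bound $c_n\ge c_1+(n-1)$. You instead exploit the full piecewise-constant structure of $f_{n-1}$ on $[k,k+1)$: the recursion $c_n=c_{n-1}+\sum_j\#\{i:v_ja_j<i<v_ja_{j+1}\}$, together with the telescoping of the $-1$'s against $c_{n-1}$ and the bound $v_j\ge f_{n-1}(k)=k^{n-1}$, yields the exponential estimate $c_n\ge k^{n-1}$, which is quantitatively much stronger and closer to the truth (for $k=3$: $c_3=11$ versus the paper's guaranteed $c_3\ge 3$). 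Your bookkeeping is sound: the two families of discontinuities are disjoint by the strict inequalities, Theorem~\ref{t5} guarantees the old jumps persist, and integer-valuedness plus monotonicity gives both finiteness of the partition and constancy on each piece. You also correctly flag that $k=1$ must be excluded --- there $f_n\equiv 1$ on $(1,2)$ and $|P(1,2,f_n)|=1$ for all $n$, so the theorem as literally stated (for all $k\in\mathbb{Z}^{+}$) fails; the paper's own proof silently assumes $k\ge 2$ at the step ``$\forall k\ge 2$, $\forall n\ge 1$,'' so your restriction is not a weakness of your argument but a needed correction to the statement.
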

\begin{proof}
Since from Theorem \ref{t5} we know that:
\[
P(f_1)\subset P(f_2)\subset\dots\subset P(f_n) \implies P(k,k+1,f_1)\subset P(k,k+1,f_2)\subset\dots\subset P(k,k+1,f_n)
\]
it's sufficient to show that $\forall n\geq 2, \exists d\in P(k,k+1,f_n)\setminus P(k,k+1,f_{n-1})$. In fact we'll show that:
\[
d = k+\frac{1}{k^{n-1}} \in P(k,k+1,f_n)\setminus P(k,k+1,f_{n-1})
\]
First we want to prove that:
\[
k+\frac{1}{k^{n}} \in P(k,k+1,f_{n+1})
\]
Or, using the definition of this set:
\[
\lim_{x\to (k+\frac{1}{k^{n}})^{-}} f_{n+1}(x) \not = \lim_{x\to (k+\frac{1}{k^{n}})^{+}} f_{n+1}(x)
\]
In fact we'll prove by induction on $n\geq 2$ that using $\delta=\frac{1}{k^{n}}$:
\[
f_{n+1}(x)=k^{n+1} \mbox{ $\forall x\in\Bigl(k+\frac{1}{k^{n}}-\delta,k+\frac{1}{k^{n}}\Bigr)$} \mbox{ while } f_{n+1}(x)=k^{n+1}+1 \mbox{ $\forall x\in\Bigl(k+\frac{1}{k^{n}},k+\frac{1}{k^{n}}+\delta\Bigr)$}
\]
Assuming that true we will have:
\[
\lim_{x\to (k+\frac{1}{k^{n}})^{-}} f_{n+1}(x) = k^{n+1} \mbox{ while } \lim_{x\to (k+\frac{1}{k^{n}})^{+}} f_{n+1}(x) = k^{n+1}+1
\]
which is our thesis. \\
By the induction hypothesis:
\[
f_n(x) = k^{n} \mbox{ $\forall x \in\Bigl(k,k+\frac{1}{k^{n-1}}\Bigr)$}
\]
\[
f_n(x) = k^{n}+1 \mbox{ $\forall x \in\Bigl(k+\frac{1}{k^{n-1}},k+\frac{2}{k^{n-1}}\Bigr)$}
\]
But using the definition of $f_{n+1}$ with $\delta=\frac{1}{k^{n}}<\frac{1}{k^{n-1}}$ we'll have:
\[
f_{n+1}=\lfloor x\cdot f_n(x) \rfloor = \lfloor x\cdot k^{n} \rfloor \mbox{ $\forall x \in\Bigl(k,k+\frac{1}{k^{n}}\Bigr)\subset\Bigl(k,k+\frac{1}{k^{n-1}}\Bigr) $}
\]
\[
f_{n+1}=\lfloor x\cdot f_n(x) \rfloor = \lfloor x\cdot k^{n} \rfloor\mbox{ $\forall x \in\Bigl(k+\frac{1}{k^{n}},k+\frac{2}{k^{n}}\Bigr)\subset\Bigl(k,k+\frac{1}{k^{n-1}}\Bigr)$}
\]
But furthermore:
\[
k<x<k+\frac{1}{k^{n}} \implies k^{n+1}<k^{n}x<k^{n+1}+1
\]
\[
k+\frac{1}{k^{n}}<x<k+\frac{2}{k^{n}} \implies k^{n+1}+1<k^{n}x<k^{n+1}+2
\]
And finally:
\[
f_{n+1}=\lfloor x\cdot k^{n} \rfloor = k^{n+1} \mbox{ $\forall x \in\Bigl(k,k+\frac{1}{k^{n}}\Bigr)$}
\]
\[
f_{n+1}=\lfloor x\cdot k^{n} \rfloor = k^{n+1}+1 \mbox{ $\forall x \in\Bigl(k+\frac{1}{k^{n}},k+\frac{2}{k^{n}}\Bigr)$}
\]
Now we know that $k+\frac{1}{k^{n}}\in P(k,k+1,f_{n+1})$, and we want to prove that $k+\frac{1}{k^{n}}\not\in P(k,k+1,f_{n})$.
This is true in fact:
\[
\lim_{x\to(k+\frac{1}{k^{n}})^{-}}f_n(x) = \lim_{x\to(k+\frac{1}{k^{n}})^{+}}f_n(x) = k^{n}
\]
In order to prove it, as seen before, we know that:
\[
f_n(x) = k^{n} \mbox{ $\forall x \in\Bigl(k,k+\frac{1}{k^{n-1}}\Bigr)$}
\]
But then:
\[
f_n(x) = k^{n} \mbox{ $\forall x \in\Bigl(k,k+\frac{1}{k^{n}}\Bigr)\subset\Bigl(k,k+\frac{1}{k^{n-1}}\Bigr) $}
\]
\[
f_n(x) = k^{n} \mbox{ $\forall x \in\Bigl(k+\frac{1}{k^{n}},k+\frac{2}{k^{n}}\Bigr)\subset\Bigl(k,k+\frac{1}{k^{n-1}}\Bigr)$}
\]
$\forall k\geq 2$, $\forall n\geq 1$. So we'll have:
\[
\underbrace{\{k+\frac{1}{k},\dots\}}_{\text{$P(k,k+1,f_2)$}}\subset\underbrace{\{k+\frac{1}{k},k+\frac{1}{k^{2}},\dots\}}_{\text{$P(k,k+1,f_3)$}}\subset\dots\subset\underbrace{\{k+\frac{1}{k},k+\frac{1}{k^{2}},\dots,k+\frac{1}{k^{n}},\dots\}}_{\text{$P(k,k+1,f_{n+1})$}}
\]
Where $P(k,k+1,f_{n+1})$ has $n$ terms of the form $k+\frac{1}{k^{r}}$, where $r\in\{1,\dots,n\}$. 
\end{proof}
\subsection{Script in Mathematica language}
In order to compute discontinuity points of the function $f_n$ in a given interval, is possible to use this script: \cite{2}
\begin{lstlisting}[language=Mathematica,caption={To compute discontinuity points of $f_3$ in $I=(2,3)$}]
FunctionDomain[{D[Floor[x*Floor[x*Floor[x]]], x], 2 < x < 3}, x]
  \end{lstlisting} 
  which gives:
  \begin{lstlisting}
  (*2 < x < 9/4 || 9/4 < x < 5/2 || 5/2 < x < 13/5 || 13/5 < x < 14/5 ||14/5 < x < 3*)
  \end{lstlisting}
  where $\frac{9}{4},\frac{5}{2},\frac{13}{5},\frac{14}{5}$ are in fact all the discontinuity points in that interval.
\begin{conjecture} \label{conj7}
Let $f_3(x)$ be defined as before, $P(a,b,f_3)$ the set of discontinuity points of $f_3(x)$ in the open interval $[a,b)$ and $P(f_3)$ the set of all discontinuity points over the domain $x\geq 1$. Then:
\[
P(f_3) = P(f_1)\cup P(f_2)\cup\bigcup_{k=1}^{+\infty}\bigcup_{i=0}^{k-1}\Bigl\{k+\frac{(k+1)i}{k^{2}+i},k+\frac{(k+1)i+1}{k^{2}+i},\dots,k+\frac{(k+1)i+k-1}{k^{2}+i}\Bigr\}
\]
Or:
\[
P(f_3) = P(f_1)\cup P(f_2)\cup\bigcup_{k=1}^{+\infty}\bigcup_{i=0}^{k-1}\bigcup_{p=0}^{k-1}\Bigl\{k+\frac{(k+1)i+p}{k^{2}+i}\Bigr\}
\]
where:
\[
|J(k,f_3)| = \frac{k^{3}-1}{k-1}=k^{2}+k+1 \mbox{ $\forall k\in\mathbb{Z^{+}}$}
\]
and:
\[
\Bigl|J\Bigl(k+\frac{r}{k}, f_3\Bigr)\Bigr| = k+1 \mbox{ $\forall k\in\mathbb{Z^{+}}$ where $r\in\{1,2,\dots,k-1\}$}
\]
and:
\[
\Bigl|J\Bigl(p, f_3\Bigr)\Bigr| = 1 \mbox{ $\forall p\in P(f_3)\setminus P(f_2)\cap P(f_1)$}
\]
\end{conjecture}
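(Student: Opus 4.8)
The plan is to analyse $f_3$ one unit interval $[k,k+1)$ at a time and then glue the pieces together, exactly as the proof of Theorem~\ref{t4} does for $f_2$. Fix $k\in\mathbb{Z}^{+}$. On $[k,k+1)$ we have $\lfloor x\rfloor=k$, hence $f_2(x)=\lfloor kx\rfloor$, and Theorem~\ref{t4} tells us precisely that $f_2$ is the step function equal to $k^{2}+i$ on the subinterval $I_{k,i}=[\,k+i/k,\;k+(i+1)/k\,)$, for $i=0,1,\dots,k-1$. Consequently, on the interior of each $I_{k,i}$ the recursion $f_3(x)=\lfloor x\,f_2(x)\rfloor$ collapses to
\[
f_3(x)=\bigl\lfloor(k^{2}+i)\,x\bigr\rfloor,
\]
a floor of a linear function with positive integer slope. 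Its discontinuities inside $I_{k,i}$ are exactly the points $x=m/(k^{2}+i)$ with $m\in\mathbb{Z}$ strictly between the endpoints, and at each of them $\lfloor(k^{2}+i)x\rfloor$ — hence $f_3$ — jumps by exactly $1$; this already settles the jump at every discontinuity of $f_3$ that is not inherited from $P(f_2)$. The endpoints $k+i/k$ lie in $P(f_3)$ for a different reason, namely $k\in P(f_1)$ and $k+i/k\in P(f_2)$ for $1\le i\le k-1$, together with $P(f_1)\subset P(f_2)\subset P(f_3)$ from Theorem~\ref{t5}. Since every real $\ge1$ lies in exactly one $[k,k+1)$, the set identity of the statement reduces to the following purely arithmetic assertion: for every $k\ge1$ and every $i\in\{0,1,\dots,k-1\}$, the integers $m$ satisfying
\[
\frac{(k^{2}+i)^{2}}{k}<m<\frac{(k^{2}+i)(k^{2}+i+1)}{k}
\]
are exactly $m=k(k^{2}+i)+(k+1)i+p$ for $p=0,1,\dots,k-1$ (when $i=0$, the value $p=0$ reproduces the integer endpoint $x=k$ already counted in $P(f_1)$, and the genuine interior points correspond to $p=1,\dots,k-1$).

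For the jump sizes at the ``inherited'' points I would argue directly from the earlier results. At $x=k$, Theorem~\ref{t1} with $n=3$ gives $\lim_{x\to k^{-}}f_3(x)=\frac{1}{k-1}[(k-2)k^{3}+1]$, while $\lim_{x\to k^{+}}f_3(x)=k^{3}$, so $|J(k,f_3)|=\frac{k^{3}-1}{k-1}=k^{2}+k+1$. At $x=k+r/k$ with $1\le r\le k-1$, the left limit of $f_3$ is read off from $I_{k,r-1}$, where $f_3=\lfloor(k^{2}+r-1)x\rfloor$, and the right limit from $I_{k,r}$, where $f_3=\lfloor(k^{2}+r)x\rfloor$; using the elementary facts $\lim_{x\to a^{-}}\lfloor cx\rfloor=\lceil ca\rceil-1$ and $\lim_{x\to a^{+}}\lfloor cx\rfloor=\lfloor ca\rfloor$ for $c>0$ at $a=k+r/k$, a short computation turns the jump into
\[
\bigl|J(k+\tfrac{r}{k},f_3)\bigr|=k+1+\Bigl\lfloor\frac{r^{2}}{k}\Bigr\rfloor-\Bigl\lceil\frac{r(r-1)}{k}\Bigr\rceil,
\]
so the claimed value $k+1$ is equivalent to $\lfloor r^{2}/k\rfloor=\lceil r(r-1)/k\rceil$; this is immediate for $r=1$ and for $r=k-1$.

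The main obstacle is exactly this arithmetic. For $1\le i\le k-1$ the open interval $\bigl(\frac{(k^{2}+i)^{2}}{k},\frac{(k^{2}+i)(k^{2}+i+1)}{k}\bigr)$ has length $k+i/k\in(k,k+1)$, hence contains either $k$ or $k+1$ integers, and one must show it contains exactly $k$ of them, forming the consecutive block beginning at $k(k^{2}+i)+(k+1)i$. Subtracting the integer part $k(k^{2}+i)$, the endpoints become $ki+i^{2}/k$ and $k(i+1)+i(i+1)/k$, so the assertion is equivalent to the two estimates $\lfloor i^{2}/k\rfloor=i-1$ and $\lceil i(i+1)/k\rceil=i$ — equivalently, to the inequalities $i(k-i)\le k$ and $i(k-i)<k+i$ for $1\le i\le k-1$. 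The second always holds, but the first does \emph{not}: it fails already at $k=5$, $i=2$, where $\lfloor i^{2}/k\rfloor=0<i-1$, so $f_3$ picks up an extra interior discontinuity at $x=5+\tfrac{11}{27}$ near the left end of $I_{5,2}$ and the jump across $x=5+\tfrac{2}{5}$ equals $5$ rather than $k+1=6$. I therefore expect the crux to be to determine, via the size of $i^{2}\bmod k$, precisely which pairs $(k,i)$ force $\lfloor i^{2}/k\rfloor$ below $i-1$ — thereby shifting the block to the left and, when the top does not compensate, lengthening it to $k+1$ points — and, correspondingly, to correct the stated index range and the constant $k+1$ for $|J(k+r/k,f_3)|$ in those exceptional ranges. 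Once that is pinned down, reassembling $P(f_3)$ from the local pictures is bookkeeping of the same kind already carried out for $f_2$.
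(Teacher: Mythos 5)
The paper gives no proof of this statement---it is explicitly labelled a conjecture, and the surrounding ``considerations'' only sketch a strategy (reduce to the new points via $P(f_1)\subset P(f_2)\subset P(f_3)$ and compute the one-sided limits)---so the only thing to compare your argument against is that sketch. Your local reduction is exactly the intended one: on $I_{k,i}=[k+\tfrac{i}{k},\,k+\tfrac{i+1}{k})$ one has $f_2\equiv k^2+i$, hence $f_3(x)=\lfloor(k^2+i)x\rfloor$ there, and everything reduces to counting the integers $m$ in an explicit open interval. The substantive content of your proposal is that you actually carried out this arithmetic and found the conjecture to be \emph{false}, and your counterexample is correct. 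For $k=5$, $i=2$ the interval $I_{5,2}=[\tfrac{27}{5},\tfrac{28}{5})$ carries $f_3(x)=\lfloor 27x\rfloor$, whose jumps occur at $x=m/27$ with $145.8<m<151.2$, i.e.\ $m\in\{146,\dots,151\}$, whereas the conjectured family supplies only $m\in\{147,\dots,151\}$ (the points $5+\tfrac{12+p}{27}$, $p=0,\dots,4$). One checks directly that $x=\tfrac{146}{27}=5+\tfrac{11}{27}$ lies in none of $P(f_1)$, $P(f_2)$, or the sets $\{k+\tfrac{(k+1)i+p}{k^2+i}\}$ (the equation $151i+27p=275$ has no admissible solution), yet $f_3$ jumps there from $145$ to $146$. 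Likewise $\lim_{x\to(27/5)^-}f_3(x)=140$ and $\lim_{x\to(27/5)^+}f_3(x)=145$, so $|J(5+\tfrac{2}{5},f_3)|=5\neq k+1=6$. Your diagnosis is also right: the conjectured indexing requires $\lfloor i^2/k\rfloor=i-1$, equivalently $i(k-i)\le k$, which holds for all $1\le i\le k-1$ only when $k\le 4$---which is why the paper's worked example $k=3$ could not reveal the error.

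One small correction to your write-up: the upper-endpoint inequality you assert ``always holds,'' namely $i(k-i)<k+i$, fails at $k=6$, $i=2$, where both sides equal $8$. The consequence there is only that the conjectured family lists $6+\tfrac{19}{38}=6+\tfrac{1}{2}$ among the ``new'' points although it already belongs to $P(f_2)$; this produces a harmless duplication rather than a spurious element, so it does not affect your conclusion. In summary, your proposal is not (and could not be) a proof of the statement; it is a correct refutation, and the honest resolution is to repair the conjecture by replacing the index set $\{(k+1)i,\dots,(k+1)i+k-1\}$ with the exact set of integers in $\bigl(\tfrac{(k^2+i)^2}{k},\tfrac{(k^2+i)(k^2+i+1)}{k}\bigr)$ and adjusting the jump $|J(k+\tfrac{r}{k},f_3)|$ to $k+1+\lfloor r^2/k\rfloor-\lceil r(r-1)/k\rceil$ as you computed.
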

For example, consider the interval $[3,4)$. We've already said that:
\[
P(3,4,f_3)=\Bigl\{3,3+\frac{1}{9},3+\frac{2}{9},3+\frac{1}{3},3+\frac{2}{5},3+\frac{1}{2},3+\frac{3}{5},3+\frac{2}{3},3+\frac{8}{11},3+\frac{9}{11},3+\frac{10}{11}\Bigr\}
\]
This should represent this set:
\[
P(3,4,f_1)\cup P(3,4,f_2)\cup\bigcup_{i=0}^{2}\bigcup_{p=0}^{2}\Bigl\{3+\frac{4i+p}{9+i}\Bigr\} \mbox{ ($k=3$ in the formula above)}
\]
In fact:
\[
P(3,4,f_3)=\Bigl\{3\Bigr\}\cup\Bigl\{3,3+\frac{1}{3},3+\frac{2}{3}\Bigr\}\cup\Bigl\{3,3+\frac{1}{9},3+\frac{2}{9},3+\frac{4}{10},3+\frac{5}{10},3+\frac{6}{10},3+\frac{8}{11},3+\frac{9}{11},3+\frac{10}{11}\Bigr\}
\]
\subsection{Considerations of conjecture \ref{conj7}}
As in Theorem \ref{t4} we would like to prove our result using double inclusion of sets. But from Theorem \ref{t5} we know that:
\[
P(f_1)\subset P(f_2)\subset P(f_3)
\]
So it's to sufficient to prove that all the elements in the set $P(f_3)\setminus P(f_1)\cup P(f_2)$ are jump discontinuities for $f_3$ (because we already known that the elements in $P(f_1)\cup P(f_2)$ are jump discontinuities for $f_3$).
In general, the set: $P(f_3)\setminus P(f_1)\cup P(f_2)$ is given by:
\[
P(f_3)\setminus P(f_1)\cup P(f_2) = \bigcup_{k=1}^{+\infty}\bigcup_{i=0}^{k-1}\bigcup_{p=0}^{k-1}\Bigl\{k+\frac{(k+1)i+p}{k^{2}+i}\Bigr\}\setminus\{k\}
\]
We would like to show that:
\[
\lim_{x\to (k+\frac{(k+1)i+p}{k^{2}+i})^{-}}f_3(x) = k^{3}+2ik+i+p-1
\]
While:
\[
\lim_{x\to (k+\frac{(k+1)i+p}{k^{2}+i})^{+}}f_3(x) = k^{3}+2ik+i+p 
\]
For example:
\[
\lim_{x\to(3+\frac{4\cdot 1+0}{9+1})^{-}}f_3(x) = \lim_{x\to \frac{34}{10}^{-}}f_3(x)= 3^{3}+2\cdot1\cdot3+0-1=33
\]
\section{Other generalizations}
Given the result obtained before, it's trivial to prove that under the same conditions: \\
$\forall m\in\mathbb{Z^{+}}$
\[
\lim_{x\to k^{-}}\underbrace{\Biggl\lfloor x^{m}\Bigl\lfloor x^{m} \lfloor\dots\rfloor\Bigr\rfloor\Biggr\rfloor}_{\text{$n$ times}}=\frac{1}{k^{m}-1}\Bigl[(k^{m}-2)\cdot k^{m\cdot n}+1\Bigr]
\]

For example, for $k=4, n=3, m=2$ we have that:
\[
\lim_{x\to 4^{-}}\Biggl\lfloor x^{2}\Bigl\lfloor x^{2} \lfloor x^{2}\rfloor\Bigr\rfloor\Biggr\rfloor = 3823 = \frac{1}{4^{2}-1}\Bigl[(4^{2}-2)\cdot 4^{2\cdot 3}+1\Bigr] = \frac{57345}{15}
\]


\begin{thebibliography}{99}


\bibitem{1} Union of countably many countable sets is countable. (2021). \textit{MathStackExchange}.math.stackexchange.com

\bibitem{2} Jump discontinuities in Mathematica. (2022). \textit{MathematicaStackExchange}.mathematica.stackexchange.com , Available online at:\\ \url{https://mathematica.stackexchange.com/questions/264829/jump-discontinuities-in-mathematica}.

\end{thebibliography}
\end{document}